\documentclass[12 pt]{amsart}

\newtheorem{theorem}{Theorem}[section]

\newtheorem{proposition}[theorem]{Proposition}

\newtheorem{definition}[theorem]{Definition}
\newtheorem{problem}[theorem]{Problem}
\theoremstyle{definition}
\newtheorem{remark}[theorem]{Remark}
\newtheorem{example}[theorem]{Example}
\numberwithin{equation}{section}
\parindent =0.cm
\newcommand{\CC}{\mathbb{C}}
\newcommand{\RR}{\mathbb{R}}
\newcommand{\BB}{\mathcal{B}}
\newcommand{\ZZ}{\mathbb{Z}}
\newcommand{\DD}{\mathbb{D}}
\renewcommand{\Re}{\mathfrak{Re}}
\renewcommand{\Im}{\mathfrak{Im}}
\newcommand{\HH}{\mathbf{H}}

\DeclareMathOperator{\spa}{span}
\DeclareMathOperator{\DAF}{\{\text{DAF}\}}

\title{Discrete analytic Schur functions}

\author[D.  Alpay]{Daniel Alpay}
\address{(DA)
Faculty of Mathematics, Physics, and Computation\\
Schmid College of Science and Technology\\
Chapman University\\
One University Drive
Orange, California 92866\\
USA}
\email{alpay@chapman.edu}

\author[D.  Volok]{Dan Volok}
\address{(DV) Mathematics Department\\ Kansas State University\\ 138 Cardwell Hall\\1228 N.  17th Street\\
  Manhattan, KS 66506, USA}
\email{danvolok@math.ksu.edu}

\begin{document}

\maketitle

\begin{abstract} We introduce the Schur class of functions,  discrete analytic on the integer lattice in the complex plane.  As a special case,  we derive the explicit form of discrete analytic Blaschke factors and solve the related basic interpolation problem.
\end{abstract}
\mbox{}\\

\noindent AMS Classification: 30G25, 47B32

\noindent {\em Keywords:} Discrete analytic functions, Beurling-Lax theorem,
backward-shift operator
\section{Introduction}
The family of functions analytic and contractive in the open unit disk play an important role in various domains of theoretical and applied mathematics; they are in particular transfer functions of
linear dissipative systems. They bear various names, and in particular Schur functions and Schur analysis can be seen as a body of problems and methods pertaining to this family; see
\cite{MR2002b:47144} for background and references. Schur analysis has been generalized to various domains (for instance, upper triangular operators \cite{dede,add, MR2191194}, functions on
trees \cite{MR2182965,MR2481805},
quaternionic analysis \cite{zbMATH06658818}). In each case
motivation came in particular from linear system theory. The purpose of this work is to initiate a study of Schur analysis in the setting of discrete analytic functions.\\

Indeed, discrete complex analysis has drawn much attention in the recent years; see \cite{lovasz} and \cite{smirnov1}, and while many results of the classical complex analysis have been extended to
the discrete setting, the counterpart of function theory in the unit disk and Schur analysis seems to be lacking. The goal of the present paper is to fill this gap for functions that
are discrete analytic on the integer lattice in the complex plane. The latter is the simplest case of  lattice. Next steps would involve more general lattices and discrete Riemann surfaces.\\

Schur functions are contractive multipliers of the Hardy space of the disk $\mathbf H_2(\mathbb D)$. When considering the counterpart of Schur analysis in a new setting, the first task is
to define a natural counterpart of $\mathbf H_2(\mathbb D)$ (for instance, in the time-varying setting, one takes upper triangular Hilbert-Schmidt operators
from $\ell_2(\mathbb Z)$ into itself; Schur functions are then upper triangular contractions). By ``natural'' one means a Hilbert space together with a functional $E$ and
a co-isometric operator $V$ such that
\begin{equation}
I-E^*E=V^*V. 
\end{equation}

In the classical case, $E$ is point evaluation at the origin, and $V$ is the backward-shift operator. In the present setting, see Theorem \ref{bs1}.\\


The paper is organized as follows.  Section \ref{daf} reviews the classical definitions and results that go back to \cite{MR0013411} and \cite{MR0078441}.  Section \ref{dap} presents a basis of
discrete analytic polynomials suitable for "power series" expansions of discrete analytic functions.   In Section \ref{Hardy} this basis is used to define the
Hardy space of discrete analytic functions.  In Section \ref{Schur}  the Schur class of  contractive multipliers is identified by means of a positive kernel, and a Beurling-type result is
established.  Finally,  in Section \ref{Blaschke} a basic interpolation problem in the Hardy space of discrete analytic functions is solved by means of a suitable Blaschke product.

\section{Discrete analytic functions}\label{daf}
We will use the notation $\Lambda=\ZZ+i\ZZ$  for the lattice on which our functions are defined.
The following characterization of discrete analytic functions is due to J.  Ferrand; see \cite{MR0013411}.
\begin{definition}\label{defda}
The function $f:\Lambda\longrightarrow\CC$ is  a discrete analytic function 
if
$$
\dfrac{f(z+1+i)-f(z)}{1+i}=\dfrac{f(z+1)-f(z+i)}{1-i}, z\in\Lambda.$$
\end{definition}
A path in $\Lambda$ from $a$ to $b$ is a sequence $\gamma=(z_0,\ldots, z_n)$ of points in $\Lambda,$ such that $z_0=a,$ $z_n=b,$ and $$|z_{k}-z_{k-1}|=1,\quad 1\leq k\leq n.$$
The path $\gamma$ is closed if $a=b.$ The discrete integral of function $f:\Lambda\longrightarrow\CC$ over path $\gamma$ is defined by
\begin{equation}\label{di}\int_\gamma f\delta z=\sum_{k=1}^n \dfrac{f(z_{k-1})+f(z_k)}{2}(z_k-z_{k-1}).
\end{equation}
The main motivation of Definition \ref{defda} are the following two facts
(see \cite[pp. 340-341]{MR0078441}).
\begin{theorem}
\label{ol1}
\begin{enumerate} 
\item The function
$f:\Lambda\longrightarrow\CC$ is discrete analytic if,  and only if,  for every closed path $\gamma$ in $\Lambda$ it holds that $\int_\gamma f\delta z=0.$
\item If $f$ is discrete analytic, then $F(z)=\int_{0}^z f\delta z$ is also discrete analytic, where the integral is taken along any path from $0$ to $z$.
\end{enumerate}
\end{theorem}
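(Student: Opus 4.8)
The plan is to prove the two assertions in order, the second relying on the first, with the trapezoidal formula \eqref{di} as the common engine: the discrete integral is additive under concatenation of paths, and an edge traversed forwards and then immediately backwards contributes nothing, since $\tfrac{f(a)+f(b)}{2}(b-a)+\tfrac{f(b)+f(a)}{2}(a-b)=0$. For the first part I would reduce the statement about an arbitrary closed path $\gamma$ to the single elementary closed path around a unit cell $z\to z+1\to z+1+i\to z+i\to z$. Up to the back-and-forth cancellations just mentioned, any closed path in $\Lambda$ is a finite concatenation of such unit-cell boundaries --- a discrete incarnation of the fact that $\Lambda$ encloses no holes --- which I would prove by induction on the number of cells enclosed by $\gamma$, peeling off one boundary cell at a time (this is also the content of the cited pages of \cite{MR0078441}). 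Granting this, it suffices to evaluate $\int_\gamma f\,\delta z$ over one unit cell. Collecting the coefficients of $f(z),f(z+1),f(z+1+i),f(z+i)$ yields
$$\int_\gamma f\,\delta z=\frac{1-i}{2}\bigl(f(z)-f(z+1+i)\bigr)+\frac{1+i}{2}\bigl(f(z+1)-f(z+i)\bigr),$$
and dividing by $2$ shows this vanishes if and only if the identity of Definition \ref{defda} holds at $z$. This settles both implications at once: discrete analyticity makes every unit-cell integral, hence every closed-path integral, vanish; and conversely, taking $\gamma$ to be the unit cell at $z$ recovers the Ferrand equation at $z$.

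For the second part, the first part makes $F(z)=\int_0^z f\,\delta z$ well defined, independently of the path from $0$ to $z$, since two such paths differ by a closed one. To verify that $F$ is discrete analytic I would compute the four first differences attached to the cell at $z$ by appending one edge to a path reaching the relevant vertex and invoking additivity; for instance $F(z+1)-F(z)=\tfrac{1}{2}\bigl(f(z)+f(z+1)\bigr)$ and $F(z+i)-F(z)=\tfrac{i}{2}\bigl(f(z)+f(z+i)\bigr)$, and similarly for the other two edges. Substituting these expressions into the Ferrand identity for $F$ and simplifying, the identity collapses to the Ferrand identity for $f$: writing $A=f(z+1+i)-f(z)$ and $B=f(z+1)-f(z+i)$, discrete analyticity of $f$ reads $B=-iA$, and what remains to be checked after this substitution is a multiple of $A$ whose coefficient is identically zero.

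The only step that is not routine bookkeeping with the trapezoidal formula is the reduction in the first part --- expressing an arbitrary closed path as a concatenation of unit-cell boundaries modulo back-and-forth cancellations. I expect this to be the main obstacle; once it is in place, both parts follow from the one-cell computation displayed above together with elementary manipulation of the difference quotients.
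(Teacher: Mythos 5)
Your argument is correct, but note that the paper itself does not prove this statement: it is quoted verbatim from Duffin, with only the citation \cite[pp.~340--341]{MR0078441} offered in place of a proof. What you have written is essentially the classical argument from that source. Your one-cell computation is right: the trapezoidal sum \eqref{di} around $z\to z+1\to z+1+i\to z+i\to z$ collects to $\tfrac{1-i}{2}\bigl(f(z)-f(z+1+i)\bigr)+\tfrac{1+i}{2}\bigl(f(z+1)-f(z+i)\bigr)$, whose vanishing is exactly the Ferrand identity of Definition \ref{defda} at $z$ (no division by $2$ is actually needed, since $\tfrac{1}{1\pm i}=\tfrac{1\mp i}{2}$). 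Your verification of part (2) also checks out: with $A=f(z+1+i)-f(z)$ and $B=f(z+1)-f(z+i)$, the Ferrand identity for $F$ reduces to $(1+i)A+(1-i)B=0$, which holds precisely when $B=-iA$, i.e.\ when $f$ is discrete analytic. The only step you leave as a sketch --- that modulo back-and-forth cancellations every closed path in $\Lambda$ decomposes into unit-cell boundaries --- is genuinely the combinatorial heart of part (1); it is cleanest to phrase it as the statement that a closed walk determines a $1$-cycle in the lattice and that the square lattice has trivial first homology (equivalently, your winding-number/peeling induction), and this is precisely the lemma Duffin establishes on the cited pages. With that lemma granted, your proof is complete and is the intended one.
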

 
From now on,  we shall use  notation $\DAF$ for the space of all discrete analytic functions on $\Lambda.$
 For our purposes,  it would be more convenient to re-state Definition \ref{defda} in terms of the difference operators
\begin{gather*}
\delta_xf (z):=f(z+1)-f(z),\\ \delta_yf(z):=f(z+i)-f(z).
\end{gather*}
Note that $\delta_x$ and $\delta_y$ commute.  The following is \cite[Theorem 4.1]{ajsv}.
\begin{theorem} \label{ol2} $\DAF=\ker\left(\alpha_-\delta_x+\alpha_+\delta_y+\dfrac{1}{2}\delta_x\delta_y\right),$ where
$$\alpha_\pm=\dfrac{1\pm i}{2}.$$
\end{theorem}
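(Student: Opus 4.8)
The plan is to show directly that the two kernel conditions—Definition \ref{defda} and the vanishing of $\alpha_-\delta_x+\alpha_+\delta_y+\tfrac12\delta_x\delta_y$—are literally the same linear relation among the four values $f(z)$, $f(z+1)$, $f(z+i)$, $f(z+1+i)$, rewritten with different groupings of terms. First I would expand the right-hand operator applied to $f$ at a point $z$: using $\delta_x\delta_y f(z)=f(z+1+i)-f(z+1)-f(z+i)+f(z)$, together with $\delta_x f(z)=f(z+1)-f(z)$ and $\delta_y f(z)=f(z+i)-f(z)$, and substituting $\alpha_\pm=\frac{1\pm i}{2}$. This produces a linear combination $a\,f(z+1+i)+b\,f(z+1)+c\,f(z+i)+d\,f(z)$ with explicit constants $a,b,c,d$ obtained by collecting coefficients.

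Next I would start from Definition \ref{defda} and clear denominators: multiplying
$$
\frac{f(z+1+i)-f(z)}{1+i}=\frac{f(z+1)-f(z+i)}{1-i}
$$
through by $(1+i)(1-i)=2$ gives $(1-i)\bigl(f(z+1+i)-f(z)\bigr)=(1+i)\bigl(f(z+1)-f(z+i)\bigr)$, i.e. another linear combination $a'f(z+1+i)+b'f(z+1)+c'f(z+i)+d'f(z)=0$. The key step is then simply to check that the two coefficient vectors $(a,b,c,d)$ and $(a',b',c',d')$ are proportional (in fact I expect them to agree up to the harmless factor coming from $\tfrac12\delta_x\delta_y$ versus the cleared $(1\pm i)$ factors); since proportional linear relations have the same kernel, the two conditions define the same set of functions, which is exactly the claimed equality of $\DAF$ with the stated kernel.

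There is essentially no obstacle here—the whole argument is a one-point bookkeeping computation—so the only thing to be careful about is the arithmetic with the complex constants $\alpha_\pm$, in particular keeping track of $i^2=-1$ when expanding products like $(1-i)$ times the coefficient of $f(z+1+i)$, and making sure the factor of $\tfrac12$ in $\tfrac12\delta_x\delta_y$ is matched correctly against the normalization chosen when clearing denominators. I would present the computation by writing out both expanded forms and exhibiting the common linear relation explicitly, after which the equality of kernels is immediate.
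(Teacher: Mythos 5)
Your proposal is correct: the paper itself gives no proof of this statement (it simply cites \cite[Theorem 4.1]{ajsv}), and the direct coefficient comparison you describe is exactly the natural argument and does go through. Concretely, expanding the operator gives the relation $\tfrac12 f(z+1+i)-\tfrac{i}{2}f(z+1)+\tfrac{i}{2}f(z+i)-\tfrac12 f(z)=0$, while clearing denominators in Definition \ref{defda} gives $(1-i)f(z+1+i)-(1+i)f(z+1)+(1+i)f(z+i)-(1-i)f(z)=0$, and the two coefficient vectors differ by the nonzero factor $2(1-i)$, so the kernels coincide as you claim.
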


\section{A basis of discrete analytic polynomials}\label{dap}
One of the main differences between the classical complex analysis and the discrete setting is that the space $\{DAF\}$ is not closed under the pointwise multiplication - for example,  $z$ and $z^2$ are discrete analytic,  but $z^3$ is not. 
Our objective is to characterize the linear space of complex polynomials in $z$ and $z^*,$ which are discrete analytic on the lattice $\Lambda,$ and equip it with a suitable product.   
To this end,
 define operators\footnote{According to Theorem \ref{ol1},  if $f(z)$ is discrete analytic,  then so is $Zf(z).$}\footnote{Our use of notation $E_0^*$ is justified by  Proposition \ref{triv2} below, where $E_0^*$ is seen to be the adjoint of $E_0$ on an appropriate Hilbert space. } $Z:\DAF\longrightarrow\DAF,$ $E_0:\DAF\longrightarrow\CC,$ and $E_0^*:\CC\longrightarrow \DAF$ by
\[
 \begin{split} Zf(z)&=\dfrac{f(0)-f(z)}{2}+\int_0^zf\delta z;\\
E_0f&=f(0),\\ (E_0^*k)(z)&=k, \quad k\in\mathbb C.
\end{split}
\]

\begin{theorem}\label{bs1}
  It holds that
$$ \delta_x Z=I,\quad Z\delta_x=I-E_0^*E_0.$$
\end{theorem}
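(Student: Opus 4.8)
The plan is to verify the two operator identities by direct computation, using the definitions of $Z$, $\delta_x$, $E_0$, and $E_0^*$ together with the two facts about discrete integration recorded in Theorem \ref{ol1}. Throughout, let $f \in \DAF$ and write $F(z) = \int_0^z f\,\delta z$, which by Theorem \ref{ol1}(2) is again discrete analytic and satisfies $F(0)=0$; the key structural fact I will use is that $\delta_x F(z) = F(z+1) - F(z)$ equals the discrete integral of $f$ along the single horizontal edge from $z$ to $z+1$, namely $\frac{f(z)+f(z+1)}{2}$, since discrete integrals are path-independent for discrete analytic integrands and we may append this edge to any path from $0$ to $z$.

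For the first identity $\delta_x Z = I$: by definition $Zf(z) = \frac{f(0)-f(z)}{2} + F(z)$, so
\[
\delta_x(Zf)(z) = \frac{-\bigl(f(z+1)-f(z)\bigr)}{2} + \bigl(F(z+1)-F(z)\bigr) = -\frac{f(z+1)-f(z)}{2} + \frac{f(z)+f(z+1)}{2} = f(z),
\]
which gives $\delta_x Z = I$. For the second identity $Z\delta_x = I - E_0^*E_0$: here I apply $Z$ to the function $g = \delta_x f$, which is discrete analytic because $\delta_x$ preserves $\DAF$ (the operators $\delta_x,\delta_y$ commute with the operator in Theorem \ref{ol2}). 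Then $Z(\delta_x f)(z) = \frac{g(0)-g(z)}{2} + \int_0^z g\,\delta z$, and the main point is to evaluate $\int_0^z \delta_x f\,\delta z$. Computing along the standard staircase path and using that the discrete integral of $\delta_x f$ telescopes — or more cleanly, observing that $F_g(z) := \int_0^z \delta_x f\,\delta z$ is the discrete-analytic primitive of $\delta_x f$ vanishing at $0$ — one finds $F_g(z) = f(z) - f(0) - \tfrac12\bigl(\delta_x f(z) - \delta_x f(0)\bigr)$ (this is exactly the statement, already known from $\delta_x Z = I$, that $Z$ is a right inverse of $\delta_x$, read backwards). Substituting back,
\[
Z(\delta_x f)(z) = \frac{\delta_x f(0) - \delta_x f(z)}{2} + f(z) - f(0) - \frac{\delta_x f(z) - \delta_x f(0)}{2},
\]
and after collecting terms this should collapse to $f(z) - f(0) = f(z) - (E_0^*E_0 f)(z)$, giving $Z\delta_x = I - E_0^*E_0$.

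The main obstacle is getting the closed form of $\int_0^z \delta_x f\,\delta z$ correct, including the factor-of-$\tfrac12$ corrections coming from the trapezoidal weighting in the definition \eqref{di} of the discrete integral; the naive expectation that it simply telescopes to $f(z)-f(0)$ is wrong, and one must be careful that $Zf$ is built precisely so that its $\delta_x$-derivative is $f$, which forces the primitive of $\delta_x f$ to carry those correction terms. A clean way to organize this is to prove the first identity first, then note that it says $\delta_x Z f = f$ for all discrete analytic $f$; applying this with $f$ replaced by an arbitrary discrete analytic function and comparing with the definition of $Z$ lets one solve for $\int_0^z h\,\delta z$ in terms of $h$, which is then substituted into $Z\delta_x f$. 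With that bookkeeping in hand the second identity follows by a short algebraic simplification, and the only genuine input beyond linear algebra is Theorem \ref{ol1}.
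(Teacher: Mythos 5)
Your proof of the first identity is correct and coincides with the paper's argument. The second identity is where the genuine problems lie. Your closed form for the integral has the wrong sign: the correct expression is $\int_0^z\delta_xf\,\delta z=f(z)-f(0)+\tfrac12\bigl(\delta_xf(z)-\delta_xf(0)\bigr)$, with a plus on the correction term, not a minus. With the formula as you wrote it, the substitution does \emph{not} collapse to $f(z)-f(0)$; it leaves the extra term $\delta_xf(0)-\delta_xf(z)$, which is not zero in general. A quick numerical check exposes this: for $f(z)=z^2$ (which is discrete analytic) and $z=2$, the discrete integral of $\delta_xf(w)=2w+1$ from $0$ to $2$ along the real axis equals $\tfrac{1+3}{2}+\tfrac{3+5}{2}=6$, whereas your formula gives $4-\tfrac12(5-1)=2$. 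The word ``should'' in ``this should collapse'' is doing the work of a computation that, as written, fails.

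There is also a gap in how you justify the closed form, independent of the sign. Identifying $\int_0^z\delta_xf\,\delta z$ with a candidate function by matching the value at $0$ and the horizontal differences $\delta_x$ does not determine a function on the two-dimensional lattice $\Lambda$: two functions can agree at $0$, have identical $\delta_x$-differences everywhere, and still differ (by something constant on each horizontal line but varying between lines). One must also control $\delta_y$, and this is exactly where the paper invokes Theorem \ref{ol2}: if $h$ is discrete analytic and $\delta_xh=0$, then since $\delta_x\delta_yh=\delta_y\delta_xh=0$ the kernel condition forces $\alpha_+\delta_yh=0$, so $h$ is constant. The paper's proof exploits this to avoid computing the integral altogether: the first identity applied to the discrete analytic function $\delta_xf$ gives $\delta_x(Z\delta_xf-f)=0$, hence $Z\delta_xf-f$ is constant, and evaluating at $0$ (where $(Zg)(0)=0$ for every $g$) yields the constant $-f(0)=-E_0^*E_0f$. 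You gesture at this route (``read backwards''), but you never supply the constancy argument, and the explicit formula you substitute in its place is incorrect; to repair your version you would need both the sign fix and either the Theorem \ref{ol2} step or a direct verification of the vertical increments of your candidate primitive.
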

\begin{proof}
Let $f\in\DAF.$ By definition of the integral \eqref{di} and Theorem \ref{ol1},
$$(\delta_x Zf)(z)=\dfrac{f(z)-f(z+1)}{2}+\int_{z}^{z+1} f\delta z=f(z),$$
which proves the first identity.
Furthermore,  since
$$\delta_x(Z\delta_x-I)f=0,$$
Theorem \ref{ol2} implies that 
$$\delta_y(Z\delta_x-I)f=0,$$
and hence $(Z\delta_x-I)f$ is a constant function;
$$((Z\delta_x-I)f)(z)=(Z\delta_xf)(0)-f(0)=-f(0).$$
\end{proof}

For $n\in\ZZ_+$ we define $z^{(n)}=(Z^n1)(z).$ Then $z^{(n)}$ is a discrete analytic polynomial of degree $n.$ 
Since  (see \cite[p. 353]{MR0078441}) the linear space of discrete analytic polynomials of degree less or
equal to $n$ is of dimension $n+1,$
$\{z^{(n)}:n\in\ZZ_+\}$ is a basis of discrete analytic polynomials.

We define the {\em convolution} product $\odot$ of  discrete analytic polynomials as
$$\left(\sum_{m=0}^Ma_mz^{(m)}\right)\odot\left(\sum_{n=0}^Nb_nz^{(n)}\right)=\sum_{m=0}^M\sum_{n=0}^Na_mb_nz^{(m+n)}.$$
The convolution product can be extended to a suitable class of discrete analytic functions; see Section \ref{Schur} below.

\begin{remark} One can find in the literature several different bases of discrete analytic polynomials -- see,  for instance,  
\cite{zeil}.  The basis $\{z^{(n)}\},$ presented above,  was previously introduced (in a different way) by the authors in \cite{ajsv}. 
\end{remark}

\section{Hardy space of discrete analytic functions}\label{Hardy}
In order to get a more explicit description of the polynomials $z^{(n)},$ we construct a generating function. 
The following is \cite[Proposition 5.1, p. 406]{ajsv}. We give a proof for completeness.

\begin{proposition} \label{genpr}For $t\in\DD$ and $z\in\Lambda$ define
$$G_z(t)=(1+t)^{\Re(z)}\left(\dfrac{1+\alpha_+t}{1+\alpha_-t}\right)^{\Im(z)},$$
where $\alpha_\pm$ are as in Theorem \ref{ol2}.
Then
$$G_z(t)=\sum_{n=0}^\infty t^n z^{(n)}.$$
\end{proposition}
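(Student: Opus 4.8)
The plan is to verify that the claimed generating function $G_z(t)$ satisfies the two first-order difference equations in the $x$- and $y$-directions that characterize the sequence $z^{(n)}$ via the operator $Z$, together with the correct initial condition at $z=0$. First I would observe that the definition $z^{(n)}=(Z^n 1)(z)$ together with Theorem \ref{bs1} gives $\delta_x z^{(n)} = z^{(n-1)}$ for $n\geq 1$ and $\delta_x z^{(0)}=0$; equivalently, in terms of the candidate sum $\widetilde G_z(t):=\sum_{n\geq 0}t^n z^{(n)}$, one has $\delta_x \widetilde G_z(t) = t\,\widetilde G_z(t)$. Since $\delta_x$ acts on $G_z(t)$ only through the factor $(1+t)^{\Re z}$, we get $\delta_x G_z(t) = \bigl((1+t)-1\bigr)G_z(t) = t\,G_z(t)$, so the $x$-recurrence holds for $G_z$. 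The initial condition is $G_0(t)=1=\sum_n t^n\cdot 0^{(n)}$ since $z^{(0)}=1$, so $G$ and $\widetilde G$ agree on the line $\Im z = 0$ (they satisfy the same first-order $\delta_x$-recurrence with the same value at $z=0$, hence agree for all $\Re z\in\ZZ$, using invertibility of $1+t$ and $\delta_x$-translation in both directions).

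Next I would propagate in the $y$-direction. The function $G_z(t)$ is manifestly discrete analytic in $z$ for each fixed $t\in\DD$: writing $g_x = (1+t)$ and $g_y = (1+\alpha_+ t)/(1+\alpha_- t)$, one checks directly that $G$ lies in the kernel of the operator $\alpha_-\delta_x + \alpha_+\delta_y + \tfrac12\delta_x\delta_y$ from Theorem \ref{ol2}, because on $G_z(t)$ we have $\delta_x \leftrightarrow (g_x-1)$ and $\delta_y\leftrightarrow(g_y-1)$ as multiplication operators, and the identity
$$\alpha_-(g_x-1)+\alpha_+(g_y-1)+\tfrac12(g_x-1)(g_y-1)=0$$
reduces, after clearing the denominator $1+\alpha_- t$, to a polynomial identity in $t$ that holds because $\alpha_+ + \alpha_- = 1$ and $\alpha_+\alpha_- = \tfrac12$. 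Simultaneously, $\widetilde G_z(t)$ is discrete analytic in $z$ because each $z^{(n)}\in\DAF$. Thus both $G$ and $\widetilde G$ are discrete analytic functions of $z$, both satisfy $\delta_x F = tF$, and both agree on the line $\Im z = 0$. A discrete analytic function is determined by its values on $\Im z \in\{0\}$ together with the $\delta_x$-data, or more simply: by Theorem \ref{ol2}, once $\delta_x F = tF$ is known everywhere, $\delta_y F$ is determined by $F$ via $\delta_y F = -\bigl(\alpha_- \delta_x + \tfrac12\delta_x\delta_y\bigr)F/\alpha_+$, which expresses $\delta_y F$ as a multiplication operator (in $t$) applied to $F$; hence $F$ on the line $\Im z = m+1$ is determined by $F$ on $\Im z = m$. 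Since $G$ and $\widetilde G$ agree on $\Im z = 0$ and obey the same such propagation, they coincide everywhere on $\Lambda$.

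The main technical point — and the one I would be most careful about — is the convergence and manipulation of the series $\widetilde G_z(t)=\sum_n t^n z^{(n)}$ for $t\in\DD$, so that term-by-term application of $\delta_x$ and $\delta_y$ is legitimate and the "equate coefficients of $t^n$" step is valid. For fixed $z$, the polynomial $z^{(n)}$ has degree $n$, so one needs a bound like $|z^{(n)}|\leq C(z)^n$ for some constant depending on $z$; this follows from the recursion $z^{(n)} = (Z z^{(n-1)})(z)$ and the explicit form of $Z$ as a discrete integral, giving at most geometric growth in $n$ on any fixed finite region, which guarantees absolute convergence on $\DD$ (indeed on a disk of radius $>1$). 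An alternative, cleaner route that sidesteps convergence bookkeeping is purely algebraic: expand the product $(1+t)^{\Re z}\bigl((1+\alpha_+t)/(1+\alpha_-t)\bigr)^{\Im z}$ as a formal power series in $t$, call its coefficients $p_n(z)$, check $p_0 = 1$ and $\delta_x p_n = p_{n-1}$ directly from the functional equation $\delta_x G = tG$ read off coefficientwise, check $p_n\in\DAF$ from the Theorem \ref{ol2} identity above, and conclude $p_n = z^{(n)}$ by uniqueness (both are the image of $1$ under $Z^n$, since $Z$ is the two-sided inverse of $\delta_x$ fixing the normalization at $0$). I would present the argument in this algebraic form and then remark that the geometric growth bound justifies viewing it as a genuine convergent expansion for $t\in\DD$.
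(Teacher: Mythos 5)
Your argument is essentially the paper's: the paper also computes $G_{z+1}(t)-G_z(t)=tG_z(t)$ and $G_{z+i}(t)-G_z(t)=G_z(t)\,it/(1+\alpha_-t)$, invokes Theorem \ref{ol2} to conclude $G_z(t)$ is discrete analytic in $z$, reads off $\delta_x g_n=g_{n-1}$ coefficientwise, and identifies $g_n=z^{(n)}$ from $g_0=1$, $g_n(0)=0$ and Theorem \ref{bs1}. The ``algebraic form'' you say you would present is exactly this, so the core is fine. Two points deserve correction. First, your convergence justification is the one step that would fail as written: a bound $|z^{(n)}|\leq C(z)^n$ with an unspecified $C(z)$ (and the recursion through $Z$ naturally produces $C(z)>1$, roughly the path length from $0$ to $z$) only gives convergence for $|t|<1/C(z)$, not on all of $\DD$, and certainly not on a disk of radius $>1$ for $\Re(z)<0$, where $t=-1$ is a genuine pole of $G_z$. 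The clean justification — implicit in the paper's choice to \emph{define} $g_n$ as the Taylor coefficients of $G_z$ rather than to sum $\sum t^nz^{(n)}$ a priori — is that $G_z(t)$ is a rational function of $t$ whose only possible poles are at $t=-1$ and $t=-1/\alpha_-=-(1+i)$, both outside $\DD$; hence $G_z$ is analytic on $\DD$ and its Taylor series converges there, and once the coefficients are identified with $z^{(n)}$ the stated expansion follows. Second, $Z$ is not a two-sided inverse of $\delta_x$: by Theorem \ref{bs1}, $Z\delta_x=I-E_0^*E_0$, which is precisely why the normalization $g_n(0)=0$ for $n\geq1$ must be checked (it gives $Zg_{n-1}=Z\delta_xg_n=g_n-g_n(0)=g_n$). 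Your middle paragraph on propagation in the $y$-direction is correct but superfluous once each coefficient $g_n$ is known to be discrete analytic; the identification then proceeds coefficient by coefficient without comparing the two generating functions on successive horizontal lines.
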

\begin{proof}
Write $G_z(t)=\sum_{n=0}^\infty t^n g_n(z).$ Since
$$G_{z+1}(t)-G_z(t)=G_z(t)t\text{ and }G_{z+i}(t)-G_z(t)=G_z(t)\dfrac{it}{1+\alpha_-t},$$
Theorem \ref{ol2} implies that,  for every $t\in\DD,$ $G_z(t)$ is discrete analytic as a function of $z.$ Hence $g_n(z)$ are discrete analytic functions, as well, and
$$\delta_xg_n(z)=g_{n-1}(z),\quad n=1,2,\dots$$
Since $g_0(z)=1$ and 
$g_1(0)=g_2(0)=\dots=0,$ Theorem \ref{bs1} implies that
$g_n(z)=z^{(n)}.$
\end{proof}
We denote $\Lambda_+=\{z\in\Lambda\,:\,\Re(z)\geq 0\}.$
\begin{theorem} \label{cor} If $\hat f(n)$, $n=0,1,\ldots$, is a sequence in $\mathbb C$,
such that
$$\limsup_{n\rightarrow\infty}\sqrt[n]{|\hat f(n)|}<\sqrt{2},$$
then the series
$$f(z)=\sum_{n=0}^\infty \hat f(n)z^{(n)}$$
converges absolutely for $z\in\Lambda_+.$  Moreover,  $f(z)$ is discrete analytic in $\Lambda_+$ and
$$\label{unco}\hat f(n)=(\delta_x^nf)(0),\quad n=0,1,\ldots$$
\end{theorem}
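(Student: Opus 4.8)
The plan is to control the growth of the basis polynomials $z^{(n)}$ via the generating function $G_z(t)$ of Proposition \ref{genpr} and then invoke the root test. Fix $z\in\Lambda_+$. Since $\Re(z)\in\ZZ_+$, the factor $(1+t)^{\Re(z)}$ is a polynomial in $t$, while $\bigl(\tfrac{1+\alpha_+t}{1+\alpha_-t}\bigr)^{\Im(z)}$ is a rational function of $t$ whose only possible singularities are at $t=-1/\alpha_-$ (when $\Im(z)>0$) or at $t=-1/\alpha_+$ (when $\Im(z)<0$). As $|\alpha_\pm|=\tfrac{\sqrt2}{2}$, these singularities lie on the circle $|t|=\sqrt2$, so $G_z$ is analytic on the disk $|t|<\sqrt2$. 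By Proposition \ref{genpr} its Taylor coefficients at $t=0$ are precisely the numbers $z^{(n)}$, hence by Cauchy--Hadamard $\limsup_n\sqrt[n]{|z^{(n)}|}\le 1/\sqrt2$ for every $z\in\Lambda_+$. (Note that $\Re(z)\ge 0$ is essential here: for $\Re(z)<0$ the factor $(1+t)^{\Re(z)}$ would already contribute a pole at $t=-1$.)

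With this estimate in hand, if $\limsup_n\sqrt[n]{|\hat f(n)|}<\sqrt2$ then for each fixed $z\in\Lambda_+$ one has $\limsup_n\sqrt[n]{|\hat f(n)z^{(n)}|}\le\bigl(\limsup_n\sqrt[n]{|\hat f(n)|}\bigr)\cdot\tfrac{1}{\sqrt2}<1$, so $f(z)=\sum_n\hat f(n)z^{(n)}$ converges absolutely by the root test. For discrete analyticity I would use that $z\in\Lambda_+$ forces $z+1,\ z+i,\ z+1+i\in\Lambda_+$ as well, so the operator $\alpha_-\delta_x+\alpha_+\delta_y+\tfrac12\delta_x\delta_y$ of Theorem \ref{ol2} is, at each such $z$, a fixed finite linear combination of point evaluations at points where the series converges absolutely. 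It may therefore be applied term by term, and since every $z^{(n)}$ lies in its kernel, so does $f$; thus $f$ is discrete analytic on $\Lambda_+$.

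For the reconstruction formula I would first record two elementary facts: $\delta_x z^{(n)}=z^{(n-1)}$ for $n\ge1$ (because $z^{(n)}=Zz^{(n-1)}$ and $\delta_x Z=I$ by Theorem \ref{bs1}) together with $\delta_x z^{(0)}=0$; and $z^{(n)}(0)=0$ for $n\ge1$ while $z^{(0)}(0)=1$ (since $(Zg)(0)=0$ for every $g$, directly from the definition of $Z$). Applying $\delta_x$ term by term — again legitimate, as $z$ and $z+1$ both lie in $\Lambda_+$ — gives $(\delta_xf)(z)=\sum_{m\ge0}\hat f(m+1)z^{(m)}$ on $\Lambda_+$. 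The shifted sequence $\{\hat f(m+1)\}_{m\ge0}$ satisfies the same growth bound, so the first two parts of the theorem apply to it and the procedure iterates: after $n$ steps, $(\delta_x^nf)(z)=\sum_{m\ge0}\hat f(m+n)z^{(m)}$ on $\Lambda_+$, and evaluating at $z=0$ annihilates every term with $m\ge1$, leaving $(\delta_x^nf)(0)=\hat f(n)$.

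The one genuinely substantive point is the growth estimate of the first paragraph, where the threshold $\sqrt2$ enters through the location of the poles of $G_z$, i.e.\ through the identity $|\alpha_\pm|=\tfrac{\sqrt2}{2}$; once $\limsup_n\sqrt[n]{|z^{(n)}|}\le 1/\sqrt2$ is established, everything else is routine bookkeeping with absolutely convergent series.
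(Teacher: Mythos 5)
Your proposal is correct and follows essentially the same route as the paper: bound $\limsup_n\sqrt[n]{|z^{(n)}|}$ by $1/\sqrt{2}$ via the analyticity of the generating function $G_z(t)$ in $|t|<\sqrt{2}$ (using $|\alpha_\pm|=1/\sqrt{2}$ and $\Re(z)\geq 0$), then apply the root test and use $\delta_x z^{(n+1)}=z^{(n)}$ for the rest. You merely spell out the term-by-term manipulations that the paper leaves implicit, and your inequality $\limsup_n\sqrt[n]{|z^{(n)}|}\leq 1/\sqrt{2}$ is the correct pointwise statement (the paper asserts equality, which is not needed and fails, e.g., at $z=0$).
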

\begin{proof}
Note that,  for $z\in\Lambda_+,$ the function $G_z(t)$ of Proposition \ref{genpr} is analytic with respect to $t$ in the disk $|t|<\sqrt{2}.$ Thus
$$\limsup_{n\rightarrow\infty}\sqrt[n]{|z^{(n)}|}=\dfrac{1}{\sqrt{2}},\quad z\in\Lambda_+,$$
and the absolute convergence follows.  The rest follows from the observation that, in view of Theorem \ref{bs1},
$$\delta_x z^{(n+1)}=z^{(n)},\quad n=0,1,\ldots.$$
\end{proof}
Next we introduce an analogue of the Hardy space in the present setting.
\begin{definition}
The Hardy space $\HH_2(\Lambda_+)$ is the Hilbert space of discrete analytic functions
$$f(z)=\sum_{n=0}^\infty \hat f(n)z^{(n)},\text{ where } \sum_{n=0}^\infty |\hat f(n)|^2<\infty,$$
equipped with the inner product
$$\langle f,g\rangle=\sum_{n=0}^\infty \hat f(n)\hat g(n)^*.$$
\end{definition}

\begin{proposition}\label{triv2}\begin{enumerate}
\item
The space $\HH_2(\Lambda_+)$ is $Z$-invariant and $\delta_x$-invariant.  On this space $\delta_x$ is the adjoint of $Z,$ and $E_0$ is the adjoint of $E_0^*.$
\item The space $\HH_2(\Lambda_+)$ is a reproducing kernel Hilbert space with the reproducing kernel
$$K_w(z)=\sum_{n=0}^\infty z^{(n)}w^{(n)*}.$$
\item 
The linear operator
$$T: \HH_2(\Lambda_+)\longrightarrow \HH_2(\DD),\quad Tz^{(n)}=z^n,$$
is unitary.  
\item 
$$
TK_w(z)=G_{w^*}(z).
$$
\end{enumerate}\end{proposition}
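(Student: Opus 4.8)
The plan is to dispose of the four assertions in turn, the common thread being that, by the very definition of the norm, $\{z^{(n)}:n\in\ZZ_+\}$ is an orthonormal basis of $\HH_2(\Lambda_+)$.

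For (1), I would first record how $Z$ and $\delta_x$ act on this basis. From $z^{(n)}=(Z^n1)(z)$ one has $Zz^{(n)}=z^{(n+1)}$, and the identity $\delta_xZ=I$ of Theorem \ref{bs1} gives $\delta_xz^{(n)}=z^{(n-1)}$ for $n\geq1$, together with $\delta_xz^{(0)}=\delta_x1=0$. Hence on $\HH_2(\Lambda_+)$ the operator $Z$ acts as the unilateral forward shift of the orthonormal basis and $\delta_x$ as its backward shift; both send square-summable coefficient sequences to square-summable ones, so $\HH_2(\Lambda_+)$ is invariant under each, and matching coefficients gives $\langle Zf,g\rangle=\langle f,\delta_xg\rangle$, i.e. $Z^*=\delta_x$. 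For the second adjoint statement, note that $E_0^*k=k\,z^{(0)}$ and that $E_0f=f(0)=\hat f(0)$, since $z^{(n)}(0)=\delta_{n,0}$ (this is the normalization $g_1(0)=g_2(0)=\cdots=0$ used in the proof of Proposition \ref{genpr}); then $\langle E_0^*k,f\rangle_{\HH_2}=k\,\overline{\hat f(0)}=\langle k,E_0f\rangle_{\CC}$.

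For (2), the point is that $K_w$, viewed as a function of $z$, has coefficient sequence $\widehat{K_w}(n)=\overline{w^{(n)}}$. The estimate $\limsup_{n\to\infty}|w^{(n)}|^{1/n}=1/\sqrt2$ obtained in the proof of Theorem \ref{cor} shows $\sum_n|w^{(n)}|^2<\infty$, so $K_w\in\HH_2(\Lambda_+)$ (and is in particular discrete analytic on $\Lambda_+$). The reproducing identity is then immediate: $\langle f,K_w\rangle=\sum_n\hat f(n)\,w^{(n)}=f(w)$. For (3), the operator $T$ carries the orthonormal basis $\{z^{(n)}\}$ of $\HH_2(\Lambda_+)$ onto the orthonormal basis $\{z^n\}$ of the classical Hardy space $\HH_2(\DD)$, hence extends by linearity and continuity to a unitary operator.

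For (4), applying $T$ to $K_w=\sum_n\overline{w^{(n)}}z^{(n)}$ and using continuity yields $TK_w(z)=\sum_n\overline{w^{(n)}}z^n$, whereas Proposition \ref{genpr}, read with $t$ replaced by the free variable $z\in\DD$ and the lattice point taken to be $w^*$, gives $G_{w^*}(z)=\sum_n(w^*)^{(n)}z^n$. Thus everything reduces to the conjugation symmetry $(w^*)^{(n)}=\overline{w^{(n)}}$. I would prove this from the closed form in Proposition \ref{genpr}: since $\alpha_+=\overline{\alpha_-}$, $\Re(w^*)=\Re(w)$ and $\Im(w^*)=-\Im(w)$, a direct check gives $G_{w^*}(t)=\overline{G_w(t)}$ for real $t\in(-1,1)$, and expanding both sides in powers of $t$ and matching coefficients yields the claim. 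This conjugation identity is the only step that is not purely formal, and it is the one I would expect to require the most care, though it is short once the explicit generating function is invoked.
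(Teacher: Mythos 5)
Your argument is correct, and for parts (1)--(3) it simply fills in what the paper dismisses as ``straightforward'': the observation that $\{z^{(n)}\}$ is by construction an orthonormal basis on which $Z$ and $\delta_x$ act as forward and backward shifts does all the work, and your verification of the adjoint relations and of the reproducing property is exactly what one would write out. For part (4) you make the same reduction as the paper --- everything hinges on the conjugation identity $(w^*)^{(n)}=w^{(n)*}$ --- but you prove that identity differently. The paper argues structurally: by Proposition \ref{genpr} the polynomials $z^{(n)}$ are real on $\Lambda\cap\RR$, the function $(z^*)^{(n)*}$ is again a discrete analytic polynomial agreeing with $z^{(n)}$ on the real axis, and since the coefficients $\hat f(n)=(\delta_x^nf)(0)$ of Theorem \ref{cor} are computed entirely from values on the real axis, the two must coincide everywhere. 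You instead compute directly with the closed form of the generating function, checking $G_{w^*}(t)=\overline{G_w(t)}$ for real $t$ (using $\alpha_+=\overline{\alpha_-}$ and the integrality of $\Re(w),\Im(w)$, so no branch issues arise) and matching Taylor coefficients. Your route is shorter and entirely computational; the paper's route is slightly more conceptual and illustrates a uniqueness principle (a discrete analytic function on $\Lambda_+$ is determined by its restriction to the nonnegative real axis) that is of independent use. Either is a complete proof.
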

\begin{proof}
Parts 1 -- 3 are straightforward.  To prove part 4,  it suffices to show that
$$z^{(n)*}=(z^*)^{(n)},\quad z\in\Lambda.$$
Observe that,  as follows from Proposition \ref{genpr},  
$$\Im(z^{(n)})=0,\quad z\in\Lambda\cap\RR,$$
and that $(z^*)^{(n)*}$ is a discrete analytic polynomial, which coincides with $z^{(n)}$ on the real axis.  In view of Theorem \ref{cor}, 
$$(z^*)^{(n)*}=z^{(n)},\quad z\in\Lambda.$$

\end{proof}
\section{Schur class.}\label{Schur}
\begin{definition}
Let $s(z)$ be discrete analytic on $\Lambda_+.$ Then, $s(z)$ is a discrete analytic Schur function if 
$$\forall z\in\Lambda_+,\quad \sum_{n=0}^\infty |(Z^ns)(z)|^2<\infty,$$
and the kernel
$$K^s_w(z)=\sum_{n=0}^\infty z^{(n)}w^{(n)*}-(Z^ns)(z)(Z^ns)(w)^*$$ is positive in $\Lambda_+.$
\end{definition}

\begin{theorem} Let $s(z)$ be a discrete analytic Schur function.  Then $s(z)$ belongs to the Hardy space $\HH_2(\Lambda_+),$  and for every $f(z)$ from $\HH_2(\Lambda_+)$ the convolution product
$$(s\odot f)(z)=\sum_{n=0}^\infty\sum_{m=0}^n \hat s(m)\hat f(n-m)z^{(n)}$$
converges absolutely in $\Lambda_+$ to a discrete analytic function.  Moreover,
 the linear operator
$$M_sf=s\odot f$$
is a contraction on  $\HH_2(\Lambda_+).$

Conversely,  if $s\in\HH_2(\Lambda_+)$ is such that the multiplication operator $M_s$ is a contraction 
on  $\HH_2(\Lambda_+),$ then $s(z)$ is a discrete analytic Schur function.
\end{theorem}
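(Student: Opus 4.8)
\emph{Proof plan.} The whole statement is, in effect, a statement about the convolution operator $M_s$ and the reproducing kernel $K_w(z)=\sum_n z^{(n)}w^{(n)*}$ of $\HH_2(\Lambda_+)$ (Proposition \ref{triv2}), once one observes that $M_sz^{(k)}=Z^ks$: from the definition of $\odot$, $s\odot z^{(k)}=\sum_{j\ge 0}\hat s(j)z^{(j+k)}=Z^k\big(\sum_j\hat s(j)z^{(j)}\big)$, because $Z$ acts on $\HH_2(\Lambda_+)$ as the forward shift $z^{(j)}\mapsto z^{(j+1)}$. The first thing to settle, however, is that $s\in\HH_2(\Lambda_+)$, and I would do this purely at the level of kernels, before $M_s$ is even available. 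By the square-summability hypothesis the kernel $M^s_w(z):=\sum_{n}(Z^ns)(z)(Z^ns)(w)^*$ is well defined, and it is positive, being a convergent sum of the rank-one positive kernels $(Z^ns)(z)(Z^ns)(w)^*$; its reproducing kernel space $\mathcal H(M^s)$ consists of the functions $\sum_n a_n(Z^ns)$ with $(a_n)\in\ell^2$, so each $Z^ns$ lies in $\mathcal H(M^s)$ with norm $\le 1$. Since $K^s_w(z)=K_w(z)-M^s_w(z)$ is positive by hypothesis and $\HH_2(\Lambda_+)$ is the reproducing kernel Hilbert space of $K_w$ (Proposition \ref{triv2}), Aronszajn's inclusion theorem gives $\mathcal H(M^s)\subseteq\HH_2(\Lambda_+)$ with a norm decrease; taking $n=0$ yields $s\in\HH_2(\Lambda_+)$ with $\|s\|\le 1$.

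With $s\in\HH_2(\Lambda_+)$ in hand, $M_s$ is at least densely defined on the discrete analytic polynomials and $M_sz^{(k)}=Z^ks$. Expanding $M_s^*K_z$ in the orthonormal basis $\{z^{(n)}\}$, and using that $K_z\in\operatorname{dom}M_s^*$ (again a consequence of the square-summability of the numbers $(Z^ns)(z)$), one gets $M_s^*K_z=\sum_n (Z^ns)(z)^*\,z^{(n)}$, hence
\[
\sum_{n}(Z^ns)(z)(Z^ns)(w)^*=\langle M_s^*K_w,\,M_s^*K_z\rangle,\qquad\text{so that}\qquad K^s_w(z)=\langle K_w,K_z\rangle-\langle M_s^*K_w,\,M_s^*K_z\rangle .
\]
Therefore positivity of $K^s$ on $\Lambda_+$ says exactly that $\|M_s^*p\|\le\|p\|$ for every finite linear combination $p$ of reproducing kernels $K_w$, $w\in\Lambda_+$; since such combinations are dense in $\HH_2(\Lambda_+)$, this holds if and only if $M_s^*$ — hence $M_s$ — extends to a contraction of $\HH_2(\Lambda_+)$. (Equivalently, the unitary $T$ of Proposition \ref{triv2} carries $M_s$ to ordinary multiplication on $\HH_2(\DD)$ by $\sum_n\hat s(n)\zeta^n$, so this is just the classical equivalence ``Schur function $\Leftrightarrow$ contractive multiplier''.)

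This gives both directions. If $s$ is a discrete analytic Schur function, then by the above $s\in\HH_2(\Lambda_+)$ and $M_s$ is a contraction, so for $f\in\HH_2(\Lambda_+)$ the function $M_sf=s\odot f$ belongs to $\HH_2(\Lambda_+)$; its coefficient sequence $\big(\sum_{m=0}^n\hat s(m)\hat f(n-m)\big)_{n}$ is then in $\ell^2$, and Theorem \ref{cor} shows that $\sum_n\big(\sum_{m=0}^n\hat s(m)\hat f(n-m)\big)z^{(n)}$ converges absolutely on $\Lambda_+$ to a discrete analytic function, with $\|M_sf\|\le\|f\|$. Conversely, if $s\in\HH_2(\Lambda_+)$ and $M_s$ is a contraction, then $s$ is discrete analytic on $\Lambda_+$ (Theorem \ref{cor}), $\sum_n|(Z^ns)(z)|^2=\|M_s^*K_z\|^2\le\|K_z\|^2<\infty$ for every $z\in\Lambda_+$, and the identity above together with $I-M_sM_s^*\ge 0$ shows $K^s$ is positive; hence $s$ is a discrete analytic Schur function.

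The step that requires the most care is the first: extracting $s\in\HH_2(\Lambda_+)$ from the bare positivity of $K^s$, before the operator $M_s$ is available — this is why I would route it through the kernel $M^s$ and Aronszajn's theorem rather than through $M_s$ directly. The secondary nuisance is that $M_s$ is not a priori bounded, so one must work with the densely defined $M_s$ and its adjoint, and it is exactly the square-summability hypothesis that guarantees $K_z\in\operatorname{dom}M_s^*$ and makes the displayed identity meaningful; boundedness of $M_s$ then comes out as a \emph{consequence} of the kernel positivity. Everything else — the absolute convergence of the various series on $\Lambda_+$ and the discrete analyticity of their sums — is immediate from Theorem \ref{cor}.
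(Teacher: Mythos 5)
Your proof is correct and follows essentially the same route as the paper: the paper's contraction $C$ with $CK_w=\sum_n z^{(n)}(Z^ns)(w)^*$ is exactly your $M_s^*$ restricted to the span of the kernels, and both arguments hinge on the identity $K^s_w=(I-M_sM_s^*)K_w$ for the converse. Your preliminary Aronszajn step to place $s$ in $\HH_2(\Lambda_+)$ before touching $M_s$ is a careful (if slightly redundant) variant of the paper's shortcut $s=M_s1$, but it does not change the substance of the argument.
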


\begin{proof}
The positivity of $K^s_w(z)$ implies the existence of a contraction $C$ on $\HH_2(\Lambda_+),$
such that
$$(CK_w)(z)=\sum_{n=0}^\infty z^{(n)}(Z^n s)(w)^*.$$
The adjoint of $C$ is the multiplication operator $M_s.$ In particular, $s(z)=(M_s1)(z)$ is in $\HH_2(\Lambda_+).$

The converse statement follows from the identity
$$K^s_w=(I-M_sM_s^*)K_w.$$
\end{proof}
\begin{remark} Let $s\in\HH_2(\Lambda_+).$ Then $s(z)$ is a discrete analytic Schur function if,  and only if,
$(Ts)(z)$ is a Schur function in the open unit disk.  In this case
$$(T(s\odot f))(z)=(Ts)(z)(Tf)(z),\quad f\in\HH_2(\Lambda_+),z\in\DD.$$
\end{remark}

\begin{example} Let $\alpha\in\DD.$ Consider
$$s(z)=(1-|\alpha|)G_z(\alpha),\quad z\in\Lambda_+.$$
Then
$$(Ts)(z)=\dfrac{1-|\alpha|}{1-\alpha z},\quad z\in\DD,$$
hence $s(z)$ is a discrete analytic Schur function.
Since $$s(z)\odot (1-\alpha z)=1-|\alpha|,$$ 
we denote
$$G_z(\alpha)=(1-\alpha z)^{-\odot},\quad \alpha\in\DD, z\in\Lambda.$$
\end{example}

\begin{theorem}[A Beurling-type theorem] \label{beurling} Let $\HH$ be a closed subspace of $\HH_2(\Lambda_+).$ Then 
$\HH$ is $Z$-invariant if,  and only if,  there exists a discrete analytic Schur function $s(z),$ such that $M_s$ is an isometry with the range $\HH.$  
\end{theorem}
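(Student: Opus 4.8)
The plan is to transport the whole problem to the classical Hardy space $\HH_2(\DD)$ through the unitary operator $T$ of Proposition~\ref{triv2}, and then invoke the classical Beurling theorem. The first step is to record the two intertwining relations that make this transfer work. From $Zz^{(n)}=z^{(n+1)}$ one reads off $TZ=M_zT$, where $M_z$ denotes the forward shift on $\HH_2(\DD)$; so $Z$ is, up to the unitary $T$, the shift operator. From the preceding remark, which characterises discrete analytic Schur functions through $T$, one has $T(s\odot f)=(Ts)(Tf)$, i.e. $TM_s=M_{Ts}T$. Consequently $M_s$ is an isometry on $\HH_2(\Lambda_+)$ exactly when $M_{Ts}$ is an isometry on $\HH_2(\DD)$, which classically happens exactly when $Ts$ is an inner function, and in that case $M_s\HH_2(\Lambda_+)=T^{-1}\big((Ts)\,\HH_2(\DD)\big)$.

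For the implication ``$\Leftarrow$'', assume $s$ is a discrete analytic Schur function, $M_s$ is an isometry, and $\HH=M_s\HH_2(\Lambda_+)$. Since $M_s$ commutes with $Z$ (both become multiplication operators after conjugation by $T$) and $\HH_2(\Lambda_+)$ is $Z$-invariant by Proposition~\ref{triv2}, we get $Z\HH=M_s\big(Z\HH_2(\Lambda_+)\big)\subseteq M_s\HH_2(\Lambda_+)=\HH$, so $\HH$ is $Z$-invariant.

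For ``$\Rightarrow$'', let $\HH$ be a closed $Z$-invariant subspace of $\HH_2(\Lambda_+)$. Then $T\HH$ is a closed $M_z$-invariant subspace of $\HH_2(\DD)$. If $\HH=\{0\}$ the statement is vacuous, since an isometry cannot have range $\{0\}$, so suppose $\HH\neq\{0\}$; by Beurling's theorem $T\HH=\phi\,\HH_2(\DD)$ for some inner function $\phi$. Put $s=T^{-1}\phi$. Then $Ts=\phi$ is a Schur function, so by the preceding remark $s$ is a discrete analytic Schur function; since $\phi$ is inner, $M_s=T^{-1}M_\phi T$ is an isometry; and its range is $T^{-1}\big(\phi\,\HH_2(\DD)\big)=T^{-1}(T\HH)=\HH$.

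I do not expect a genuine obstacle: the argument rests entirely on the dictionary $TZ=M_zT$, $TM_s=M_{Ts}T$, together with the two classical facts that multiplication by an inner function is an isometry of $\HH_2(\DD)$ and that Beurling's theorem describes its shift-invariant subspaces. The only point needing a word of care is the trivial subspace $\HH=\{0\}$, and perhaps a careful statement of the equivalence ``$M_s$ isometric $\iff$ $Ts$ inner''. If one prefers to stay intrinsic and avoid quoting Beurling's theorem, one can argue directly: $Z$ is a pure isometry of multiplicity one on $\HH_2(\Lambda_+)$ (the orthonormal basis $\{z^{(n)}\}$ is mapped by $Z$ onto $\{z^{(n+1)}\}$), so the Beurling--Lax--Halmos analysis applies to the restriction $Z|_\HH$: the wandering subspace $\HH\ominus Z\HH$ is at most one-dimensional, a unit generator $h$ of it yields $\HH=\bigoplus_{n\geq0}Z^n(\CC h)$ and satisfies $|Th|=1$ a.e. on $\partial\DD$, and then $s=h$ meets all the requirements.
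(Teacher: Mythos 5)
Your proposal is correct and follows essentially the same route as the paper: conjugate by the unitary $T$ of Proposition \ref{triv2}, use the intertwining of $Z$ with the shift and of $M_s$ with $M_{Ts}$, and invoke the classical Beurling theorem. Your explicit handling of the trivial subspace $\HH=\{0\}$ is a small point the paper leaves implicit, but otherwise the arguments coincide.
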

\begin{proof}
$\HH$ is $Z$-invariant if,  and only if,  $T\HH$ is a subspace of $\HH_2(\DD),$ invariant under multiplication by $z$. In view of the classical Beurling theorem \cite{beurling},  this is the case if,  and only if,  $T\HH=s_0\HH_2(\DD),$ where $s_0(z)$ is inner.  The latter condition is equivalent to $\HH=s\odot\HH_2(\Lambda_+),$ where $s(z)=(T^*s_0)(z)$ is a discrete analytic Schur function,  and $M_s$ is an isometry.
\end{proof}

\section{Blaschke factors}\label{Blaschke}
Let $\lambda\in\Lambda_+.$ SInce the one-dimensional span of $K_\lambda$ in $\HH_2(\Lambda_+)$ is not,  in general,  $\delta_x$-invariant,  we consider instead a homogeneous interpolation problem in the rectangle
$$R_\lambda=\{z\in\Lambda_+:\Re(z)\leq\Re(\lambda),|\Im(z)|\leq|\Im(\lambda)|,\Im(z)\Im(\lambda)\geq 0.\}$$

 \begin{problem} \label{prob}Describe all $f\in\HH_2\Lambda_+)$ such that 
$$f(z)=0,\quad z\in R_\lambda.$$
\end{problem}
Denote $\HH_\lambda=\spa\{K_w:w\in R_\lambda\}.$ Then $\HH_\lambda$ is a finite-dimensional $\delta_x$-invariant subspace of $\HH(K).$ The set of solutions of Problem \ref{prob} forms the orthogonal complement in $\HH_2(\Lambda_+)$ of $\HH_\lambda$ and is,  in view of Theorem \ref{beurling},  of the form
$\BB_\lambda\HH_2(\lambda_+),$ where $\BB_\lambda(z)$ is a discrete analytic Schur function,  such that the convolution product with $\BB_\lambda(z)$ is an isometry on $\HH_2(\Lambda_+)$.

\begin{definition} The discrete analytic Schur functions
$$
\BB_\pm(z)=\alpha_\mp+\alpha_\pm z\odot (1+\alpha_\pm z)^{-\odot}\alpha_\mp,
$$
where $\alpha_\pm$ are as in Proposition \ref{genpr},
are called Blaschke factors in,  respectively,  the upper and lower quadrants of $\Lambda_+.$
\end{definition}

\begin{theorem} \label{blat} Up to an unimodular  multiplicative constant,
$$\BB_\lambda =z^{(\Re(\lambda)+1)}\odot
\BB_\pm^{\odot|\Im(\lambda)|},$$
where the sign "$\pm$" stands for "$+$" if $\Im(\lambda)\geq 0$ and "$-$" otherwise. 
\end{theorem}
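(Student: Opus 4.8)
The plan is to transport the problem to the classical Hardy space $\HH_2(\DD)$ via the unitary $T$ of Proposition~\ref{triv2} and identify $T\BB_\lambda$ as a classical finite Blaschke product, then match the two factorizations. First I would observe that since $M_{\BB_\lambda}$ is an isometry on $\HH_2(\Lambda_+)$ with range $\HH_\lambda^\perp$, the operator $T M_{\BB_\lambda} T^*$ is an isometry on $\HH_2(\DD)$ whose range is $(T\HH_\lambda)^\perp$, hence by the classical Beurling theorem it is multiplication by an inner function $b_\lambda(z) = (T\BB_\lambda)(z)$, determined up to a unimodular constant. So the theorem reduces to the assertion $b_\lambda(z) = c\, z^{\Re(\lambda)+1}\, b_\pm(z)^{|\Im(\lambda)|}$ for some $|c|=1$, where $b_\pm(z)=(T\BB_\pm)(z)$.

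Next I would compute $b_\pm$ explicitly. From the definition $\BB_\pm(z)=\alpha_\mp + \alpha_\pm z\odot(1+\alpha_\pm z)^{-\odot}\alpha_\mp$, applying $T$ (which turns $\odot$ into pointwise product and sends $z^{(1)}$ to $z$, and $G_z(\alpha)=(1-\alpha z)^{-\odot}$ to $\frac{1}{1-\alpha z}$ by the Example) gives
$$
b_\pm(z) = \alpha_\mp + \frac{\alpha_\pm\alpha_\mp z}{1+\alpha_\pm z}
= \frac{\alpha_\mp(1+\alpha_\pm z) + \alpha_\pm\alpha_\mp z}{1+\alpha_\pm z}
= \frac{\alpha_\mp + (\alpha_\mp\alpha_\pm + \alpha_\pm\alpha_\mp)z}{1+\alpha_\pm z}.
$$
Since $\alpha_+\alpha_-=\tfrac12$, the numerator is $\alpha_\mp + z$, so $b_\pm(z) = \dfrac{z+\alpha_\mp}{\,1+\alpha_\pm z\,} = \dfrac{z+\alpha_\mp}{1+\alpha_\pm z}$; noting $\alpha_\pm=\overline{\alpha_\mp}$ and $|\alpha_\pm|^2=\tfrac12<1$, this is indeed a single Blaschke factor with zero at $-\alpha_\mp\in\DD$, up to the unimodular normalizing constant. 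Thus the claimed right-hand side transports to $c\,z^{\Re(\lambda)+1}\left(\dfrac{z+\alpha_\mp}{1+\alpha_\pm z}\right)^{|\Im(\lambda)|}$, a Blaschke product of degree $\Re(\lambda)+1+|\Im(\lambda)|$ with a zero of order $\Re(\lambda)+1$ at the origin and a zero of order $|\Im(\lambda)|$ at $-\alpha_\mp$.

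It then remains to show $T\HH_\lambda$ is exactly the orthogonal complement of this Blaschke product's range, equivalently that $(T\HH_\lambda)^\perp = b_\lambda\HH_2(\DD)$ with $b_\lambda$ the degree-$(\Re(\lambda)+1+|\Im(\lambda)|)$ Blaschke product above. Since $\HH_\lambda=\spa\{K_w:w\in R_\lambda\}$ and $TK_w = G_{w^*}$ by Proposition~\ref{triv2}(4), we have $T\HH_\lambda = \spa\{G_{w^*}(z): w\in R_\lambda\}$; the dimension of this space equals $\#R_\lambda = (\Re(\lambda)+1)(|\Im(\lambda)|+1)$, which generally exceeds the Blaschke degree, so $(T\HH_\lambda)^\perp$ cannot simply be the span of the point-evaluation kernels. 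Instead I would use that $(T\HH_\lambda)^\perp$, being the range of an isometric multiplier, is $\{0\}\subseteq\HH_2(\DD)$ or $\theta\HH_2(\DD)$ for inner $\theta$; then $\theta$ is determined by the common zeros (with multiplicity) of all functions in $\theta\HH_2(\DD)$, equivalently the common zeros of $T\HH_\lambda^\perp$, and one computes these from the condition that $f\in\HH_2(\Lambda_+)$ vanishes on $R_\lambda$. Concretely, $f(z)=0$ for all $z\in R_\lambda$ is, via the power-series coefficients $\hat f(n)=(\delta_x^n f)(0)$ and the generating function $G_z(t)$, a system of linear conditions on $(Tf)$; the real part range $0\le\Re(z)\le\Re(\lambda)$ forces a zero of order $\Re(\lambda)+1$ at $t=0$ (since $(1+t)^{\Re(z)}$ contributes only low-order terms), while the imaginary part range forces, through the factor $\bigl(\frac{1+\alpha_+t}{1+\alpha_-t}\bigr)^{\Im(z)}$ for $\Im(z)=0,\pm1,\dots,\pm|\Im(\lambda)|$, a zero of order $|\Im(\lambda)|$ at the point where this factor degenerates — namely $t=-\alpha_\mp$ (depending on the sign of $\Im(\lambda)$). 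Matching these vanishing orders to the Blaschke product pins down $\theta = b_\lambda$ up to a unimodular constant, and transporting back by $T^*$ yields $\BB_\lambda = c\, z^{(\Re(\lambda)+1)}\odot\BB_\pm^{\odot|\Im(\lambda)|}$.

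The main obstacle I anticipate is the last step: carefully verifying that the interpolation conditions $f\equiv 0$ on the rectangle $R_\lambda$ translate into \emph{exactly} a zero of order $\Re(\lambda)+1$ at the origin and order $|\Im(\lambda)|$ at $-\alpha_\mp$ for $Tf$, and nothing more — i.e. that the inner part of $T\HH_\lambda^\perp$ has no additional Blaschke or singular factors. This requires showing that the span $\{G_{w^*}:w\in R_\lambda\}$, despite having larger dimension, has orthogonal complement of codimension exactly $\Re(\lambda)+1+|\Im(\lambda)|$, which amounts to a rank computation on the generating-function coefficients; the structured product form of $G_z(t)$ and the commuting shifts $\delta_x,\delta_y$ should make this tractable, essentially reducing it to the classical fact that the functions $t\mapsto t^j(1+\alpha_- t)^{-k}$ for $0\le j\le\Re(\lambda)$, $0\le k\le|\Im(\lambda)|$ span a space whose Hardy-space closure has complement $t^{\Re(\lambda)+1}(t+\alpha_\mp)^{|\Im(\lambda)|}/(1+\alpha_\pm t)^{|\Im(\lambda)|}\cdot\HH_2(\DD)$.
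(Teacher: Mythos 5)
Your reduction to $\HH_2(\DD)$ via $T$ and your computation $(T\BB_\pm)(z)=\dfrac{z+\alpha_\mp}{1+\alpha_\pm z}$ are both correct and consistent with what the paper uses. But there is a genuine gap, and it sits exactly where you place your ``main obstacle'': the identification of $(T\HH_\lambda)^\perp$. Moreover, you enter that step from a false premise. The dimension of $T\HH_\lambda=\spa\{G_{w^*}:w\in R_\lambda\}$ is \emph{not} $\#R_\lambda=(\Re(\lambda)+1)(|\Im(\lambda)|+1)$: the kernels $K_w$, $w\in R_\lambda$, are linearly dependent. Indeed, for $\Im(\lambda)\ge 0$ and $w\in R_\lambda$,
$$G_{w^*}(t)=(1+t)^{\Re(w)}\left(\frac{1+\alpha_-t}{1+\alpha_+t}\right)^{\Im(w)}=\frac{(1+t)^{\Re(w)}(1+\alpha_-t)^{\Im(w)}(1+\alpha_+t)^{\Im(\lambda)-\Im(w)}}{(1+\alpha_+t)^{\Im(\lambda)}},$$
and as $w$ runs over $R_\lambda$ the numerators span exactly the polynomials of degree at most $\Re(\lambda)+\Im(\lambda)$ (for fixed $\Re(w)$ the products $(1+\alpha_-t)^{k}(1+\alpha_+t)^{\Im(\lambda)-k}$, $0\le k\le\Im(\lambda)$, already span all polynomials of degree $\le\Im(\lambda)$, since $1+\alpha_-t$ and $1+\alpha_+t$ are independent linear forms). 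Hence $\dim T\HH_\lambda=\Re(\lambda)+|\Im(\lambda)|+1$, which is precisely the degree of the Blaschke product in the statement. This count is not an inconvenience to be argued around; it is the fact that makes the theorem true, and your proposal never establishes it.

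Once the dimension is known, the proof closes far more cheaply than your ``common zeros of the orthogonal complement'' program, which, as you yourself note, still requires ruling out extra Blaschke or singular factors. The paper's route: $T\BB_\lambda$ is a finite Blaschke product of degree $\dim T\HH_\lambda=\Re(\lambda)+|\Im(\lambda)|+1$; for each $w\in R_\lambda$ the ratio of $G_{w^*}(z)$ by $z^{\Re(\lambda)+1}(T\BB_+)(z)^{\Im(\lambda)}$ is analytic in $|z|>1$ and vanishes at $\infty$, so $z^{\Re(\lambda)+1}(T\BB_+)^{\Im(\lambda)}\HH_2(\DD)\subseteq(T\HH_\lambda)^\perp=T\BB_\lambda\HH_2(\DD)$; equality of degrees then forces equality up to a unimodular constant. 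If you insist on your route you must (i) prove the codimension statement above, which is what excludes a singular inner part and caps the total number of zeros, and (ii) correct a sign slip in your final reduction: for $\Im(\lambda)\ge0$ the relevant spanning functions carry the denominator $(1+\alpha_+t)^{k}$, not $(1+\alpha_-t)^{k}$ --- it is orthogonality to $1/(1+\alpha_+t)^{k}$ (a reproducing kernel based at $-\alpha_-$) that forces the zero of $Tf$ at $-\alpha_-$, matching the zero of $T\BB_+$.
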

\begin{proof}
Note that $T\HH_\lambda$ is a finite-dimensional backward-shift invariant subspace of $\HH_2(\DD).$ Thus
$T\BB_\lambda$ is a Blaschke product of degree equal to
 $$\dim(T\HH_\lambda)=
\Re(\lambda)+|\Im(\lambda)|+1.$$
Furthermore,  according to Proposition \ref{triv2},
$$T\HH_\lambda=\spa\{G_{w^*}\,:\,w\in I_\lambda\}.$$ 
Suppose that $\Im(\lambda)\geq0.$ Then for $w\in I_\lambda$ 
$$\dfrac{G_{w^*}(z)}{z^{\Re(\lambda)+1}(T\BB_+)(z)^{\Im(\lambda)}}=
\dfrac{(1+z)^{\Re(w)}(1+\alpha_-z)^{\Im(w)}(1+\alpha_+z)^{\Im(\lambda-w)}}{z^{\Re(\lambda)+1}(z+\alpha_-)^{\Im(\lambda)}}$$
is analytic outside the unit circle and vanishes at $\infty.$ Thus
$$z^{\Re(\lambda)+1}(T\BB_+)(z)^{\Im(\lambda)}$$ is a Blaschke product of degree $\Re(\lambda)+\Im(\lambda)+1,$
which is orthogonal to $T\HH_\lambda$ and,  therefore,  coincides with $T\BB_\lambda$ up to an unimodular constant.  The case $\Im(\lambda)<0$ is similar.
\end{proof}
\begin{remark} It was proved in \cite[p. 352]{MR0078441} that all non-trivial polynomial solutions to Problem \ref{prob} are of degree at least  $\Re(\lambda)+|\Im(\lambda)|+1.$  Theorem \ref{blat} allows to give an explicit description: a discrete analytic polynomial $p(z)$ vanishes on the rectangle $R_\lambda$ if, and only if, it is of the form
$$p(z)=z^{(\Re(\lambda)+1)}\odot(z+\alpha_\mp)^{\odot|\Im(\lambda)|}\odot q(z),$$
where $q(z)$ is an arbitrary discrete analytic polynomial, and the choice of sign is the same as in the theorem.
In other words,  in the ring of discrete analytic polynomials equipped with the  product $\odot,$ the polynomial solutions to Problem 6.1 form a principal ideal.  
\end{remark}

\section*{Acknowledgments}
Daniel Alpay thanks the Foster G. and Mary McGaw Professorship in Mathematical Sciences, which supported this research.

\end{document}